\documentclass[conference, 10pt, twocolumn]{ieeeconf}       
\bstctlcite{bstctl:etal, bstctl:nodash, bstctl:simpurl}
\def\BibTeX{{\rm B\kern-.05em{\sc i\kern-.025em b}\kern-.08em
    T\kern-.1667em\lower.7ex\hbox{E}\kern-.125emX}}
    
\usepackage[left=54pt, right=54pt,bottom=54pt, top=54pt]{geometry}

\usepackage{amsmath,mathrsfs,amsfonts,amssymb,graphicx,epsfig}
 \usepackage{amsthm}
\usepackage{subcaption}
\usepackage{color,multirow,rotating,tcolorbox}    
\usepackage{algorithmic,algorithm}
\usepackage{cite,url,framed,bm,balance,nicematrix,physics}
\usepackage{stmaryrd,mathtools}
\usepackage[dvipsnames]{xcolor}
\let\labelindent\relax
\usepackage{enumitem}
\usepackage{balance}

\newtheorem{theorem}{Theorem}

\newtheorem{proposition}{Proposition}

\newtheorem{mytheorem}{Theorem}[]      

\usepackage{tikz}
\usetikzlibrary{calc,trees,positioning,arrows,chains,shapes.geometric,decorations.pathreplacing,decorations.pathmorphing,shapes, matrix,shapes.symbols}

\usepackage[breaklinks]{hyperref}

\hypersetup{
    colorlinks=true,
    linkcolor=blue,
     citecolor=blue,
    pdftitle={Overleaf Example},
    pdfpagemode=FullScreen,
    }

\definecolor{DarkGreen}{RGB}{46,139,87}

\newcommand{{\dx}}{{{\rm d} x}}
\newcommand{{\dy}}{{{\rm d} y}}
\newcommand{{\dt}}{{{\rm d} t}}

\makeatletter
\newcommand{\pushright}[1]{\ifmeasuring@#1\else\omit\hfill$\displaystyle#1$\fi\ignorespaces}

\newcommand{\dotminus}{\mathbin{\text{\@dotminus}}}

\newcommand{\@dotminus}{%
  \ooalign{\hidewidth\raise1ex\hbox{.}\hidewidth\cr$\m@th-$\cr}%
}

\allowdisplaybreaks

\title{\LARGE\textbf{A Generalized Sinkhorn Algorithm for Mean-Field Schr\"{o}dinger Bridge
}}

\author{Asmaa Eldesoukey, Yongxin Chen, Abhishek Halder
\thanks{Asmaa Eldesoukey and Abhishek Halder are with the Department of Aerospace Engineering, Iowa State University, Ames, IA 50011, USA, {\tt\footnotesize{\{{asmaae},ahalder\}@iastate.edu}}.}
\thanks{Yongxin Chen is with the School of Aerospace Engineering, Georgia Institute of Technology, Atlanta, GA 30332, USA, {\tt\footnotesize{ychen3148@gatech.edu}}.}
\thanks{This research was partially supported by NSF awards 2111688, 2450377, 2450378.}
}

\IEEEoverridecommandlockouts
\begin{document}
\bstctlcite{IEEE_b:BSTcontrol}
\maketitle

\begin{abstract}
The mean-field Schr\"odinger bridge (MFSB) problem concerns designing a minimum-effort controller that guides a diffusion process with nonlocal interaction to reach a given distribution from another by a fixed deadline. Unlike the standard Schr\"{o}dinger bridge, the dynamical constraint for MFSB is the mean-field limit of a population of interacting agents with controls. It serves as a natural model for large-scale multi-agent systems. The MFSB is computationally challenging because the nonlocal interaction makes the problem nonconvex. We propose a generalization of the Hopf-Cole transform for MFSB and, building on it, design a Sinkhorn-type recursive algorithm to solve the associated system of integro-PDEs. We analyze the local convergence of the proposed algorithm. We also present numerical examples with repulsive and attractive interactions to illustrate the theoretical contributions.
\end{abstract}

\section{Introduction}\label{sec:Intro}
\noindent\textbf{The problem.} Consider a collection of $N$ interacting agents, with $N$ large, following the controlled noisy dynamics:
\begin{align}
\differential X_t^{i} = \big\{\!\sigma a_{t}^{i} -\dfrac{1}{N}\sum_{j=1}^{N}\nabla W\!\big(X_{t}^{i} - X_{t}^j\big)\big\}\:\differential t + \sigma\:\differential B_{t}^{i},
\label{AgentSDE}
\end{align}
where $i\in\llbracket N\rrbracket := \{1,\hdots,N\}$, time $t\in[0,1]$, and $X_{t}^{i},a_{t}^{i},B_{t}^{i}\in \mathbb{R}^{d}$ denote the state, control action, and (standard Brownian) process noise for the $i$th agent, respectively. In \eqref{AgentSDE}, the constant $\sigma>0$ is the input/noise strength, $\nabla$ denotes the spatial gradient operator, and $W(\cdot)$ is a known \emph{interaction potential} satisfying the assumptions \ref{A1}-\ref{A3}:
\begin{enumerate}[label=\textbf{A\arabic*}]
\item $W \in \mathscr C^2(\mathbb R^d;\mathbb R)$, \label{A1}
\item $W$ is symmetric, i.e., $W(x)=W(-x)\:\forall x\in\mathbb{R}^{d}$, \label{A2}
\item the Hessian, $\mathrm{Hess}(W)$, is uniformly upper bounded. \label{A3}
\end{enumerate}

In the large-population limit ($N\rightarrow\infty$), we are interested in designing a \emph{minimum-effort feedback control strategy}
$a_{t}^{i} =: u_{t}\left(X_t^{i}\right), i\in\llbracket N\rrbracket,$
that guides the initial stochastic state $X_{0}^{i}\sim p_{\mathrm{in}}$ to a final stochastic state $X_{1}^{i}\sim p_{\mathrm{fin}}$ over the fixed time horizon $[0,1]$. Here, $p_{\mathrm{in}},p_{\mathrm{fin}}$ are given initial and final probability distributions that are assumed to be absolutely continuous and of finite second moments. Throughout, we use the same symbols to denote absolutely continuous probability measures and their probability density functions (PDFs). The minimum-effort objective is the average quadratic control action 
$\mathbb{E}\int_{0}^{1} (1/2N)\sum_{i=1}^{N} \vert a_t^{i}\vert^2 \differential t$
where $\mathbb{E}$ denotes the expectation operator induced by the underlying probability distribution, and $\vert\cdot\vert$ the Euclidean norm.

We refer to $N\rightarrow\infty$ as the \emph{mean-field limit} since the collective dynamics are then governed by the distribution
$p_t \approx (1/N)\sum_{i=1}^{N}\delta_{X_t^{i}}$ where $\delta_{X_t^{i}}$ denotes Dirac measure at $X_t^{i}.$
In that limit, $-(1/N)\sum_{j=1}^{N}\nabla W(X_{t}^{i} - X_{t}^j) \approx -(\nabla W * p_t)(X_t^i)$ where $*$ denotes convolution, and the PDF $p_t$ satisfies the \emph{McKean-Vlasov integro-PDE} \cite{mckean1966class}:
\begin{align}\label{MFSB_dynconstr}
\partial_t p_t
    +\nabla\cdot\Big(
      p_t\big(\sigma u_t-\nabla W * p_t\big)\Big)= \frac{\sigma^2}{2}  \Delta p_t,
\end{align}
where $\nabla\cdot$ denotes the divergence, and $\Delta$ is the standard Laplacian operator.

Our stochastic optimal control problem of interest is the following. Given $p_{\mathrm{in}},p_{\mathrm{fin}}$ supported on subsets of $\mathbb{R}^{d}$, determine i) a finite-energy control policy $u: [0,1] \times \mathbb R^d \to \mathbb R^d$, $u: (t,x) \mapsto u_t(x)$, and ii) the PDF-valued curve $p=(p_{t})_{t\in[0,1]}$, with $p_t$ having finite second moments such that $(u,p)$ solves the variational problem:
\begin{subequations}
\begin{align}
&\underset{u,p}{\text{minimize}}\quad\frac{1}{2} \int_0^1 \int_{\mathbb R^d} \vert u_t(x)\vert^2\: p_t(x)\: \differential x \:\differential t, 
\label{MFSB_obj}\\
&\text{subject to }  \quad p_{0} = p_{\mathrm{in}}, \quad 
   \quad  p_{1} = p_{\mathrm{fin}},\label{MFSB_bc}
    \end{align}
    and \eqref{MFSB_dynconstr}.
\label{MFSB_problem}
\end{subequations}%
We refer to \eqref{MFSB_problem} as the \emph{mean-field Schr\"{o}dinger bridge (MFSB)} problem.

\noindent\textbf{Related works.} The MFSB differs from the classical Schr\"{o}dinger bridge (SB) in that the latter is the $W\equiv 0$ special case of \eqref{MFSB_problem}. Note in particular that for $W\neq 0$, the dynamical constraint \eqref{MFSB_dynconstr} is nonlinear in $p_t$. In the absence of interaction, several control-theoretic works have studied the SB with more general drift and diffusion coefficients \cite{chen2015optimal,chen2021stochastic,caluya2020finite,caluya2021wasserstein,nodozi2023neural,teter2025hopf}, with additional state cost \cite{8249875,11239424,teter2025markov,teter2025probabilistic} and constraints \cite{caluya2021reflected,eldesoukey2024schrodinger,eldesoukey2024excursion,movilla2024inferring}. In comparison, the nonconvex problem \eqref{MFSB_problem} is much less explored. 

Backhoff et al. \cite{backhoff2020mean} studied the MFSB problem from a large-deviation perspective and proved existence of solutions under the stated assumptions \ref{A1}-\ref{A3}. They also showed that the large-deviation formalism is equivalent to the dynamic formulation given in \eqref{MFSB_problem}. However, their work did not address numerical solutions.
Chen \cite{chen2023density} considered the static large-deviation problem equivalent to \eqref{MFSB_problem} and proposed to solve its discrete version as a non-standard multi-marginal SB. The reformulated problem in \cite{chen2023density} remains nonconvex and is solved using a proximal gradient algorithm that is guaranteed to converge locally at a sublinear rate. 


\noindent\textbf{Motivation.} 
Problem \eqref{MFSB_problem} has natural applications in steering multi-agent populations as in crowd dynamics \cite{colombo2012nonlocal}, population biology \cite{topaz2006nonlocal,zhang2025deciphering}, and in swarm robotics \cite{elamvazhuthi2020mean}. For instance, $W$ may model limited communication or information sharing due to trust, privacy, or power constraints.    

\noindent\textbf{Contributions.} Our technical contributions are threefold.
\begin{itemize}
\item We propose a generalized Hopf-Cole transform to derive a new Schr\"{o}dinger system for \eqref{MFSB_problem} (Sec. \ref{subsec:GeneralizedHopfCole}).

\item Using the above, we design a generalized Sinkhorn algorithm (Sec. \ref{subsec:GeneralizedSinkhorn}) to solve \eqref{MFSB_problem} numerically. Beyond novelty, the proposed algorithm is a significant structural generalization of existing Sinkhorn algorithms, and avoids tensorial optimization as in \cite{chen2023density}.

\item We analyze the local convergence of the proposed algorithm (Sec. \ref{subsec:GeneralizedSinkhorn}), and illustrate its effectiveness via numerical examples (Sec. \ref{sec:NumericalResults}).
\end{itemize}


\section{Conditions for Optimality}\label{sec:ConditionsForOptimality}
For computational convenience, we scale the interaction potential as $W\mapsto \sigma^2 W$
\emph{without loss of generality}. Therefore, we consider the scaled version of \eqref{MFSB_dynconstr}:
\begin{align}\tag{\ref{MFSB_dynconstr}$^{\prime}$} 
\label{MFSB_dynconstr-W-scaled}
\partial_t p_t
    +\nabla\!\cdot\!(
      p_t(\sigma u_t-\sigma^2(\nabla W * p_t))) = (\sigma^2/2) \Delta p_t.
\end{align}
Backhoff et al. \cite[Proposition 1.1]{backhoff2020mean} formulated the MFSB problem as a large-deviation problem and proved that it admits at least one solution. They also derived the equivalent optimal control formulation in \eqref{MFSB_problem}, cf. \cite[Lemma~1.1]{backhoff2020mean}. 
We summarize their existence result for the dynamic problem, adapted to the scaling $W\mapsto \sigma^2 W$, as follows.
\begin{proposition}\label{prop:DualHJBsystem}
\cite[Theorem~1.3, Corollaries~1.1 and 1.2]{backhoff2020mean} Under \ref{A1}-\ref{A3} and the assumptions on $p_{\rm in}$ and $p_{\rm fin}$, Problem~\eqref{MFSB_problem} admits at least one solution $(u^{\rm opt}, p^{\rm opt})$.  Moreover,  $u^{\rm opt} \in H_{-1}\big((p_t^{\rm opt})_{t\in [0,1]}\big)$, where $H_{-1}\big((p_t^{\rm opt})_{t\in [0,1]}\big)$ denotes the closure of the space of smooth gradient vector fields in the  $p_t^{\rm opt}$-weighted $L^2$ norm. If, in addition, $p^{\rm opt} \in \mathscr C^{1,2}([0,1] \times \mathbb R^d; \mathbb R)$, $u^{\rm opt}  \in \mathscr C^{1,2}([0,1] \times \mathbb R^d; \mathbb R^d)$, and $p^{\rm opt}>0$, then there exist $\psi, \phi \in \mathscr C^{1,2}([0,1] \times \mathbb R^d; \mathbb R)$ such that 
\begin{align} \label{eq:optimal-u}
    u_t^{\rm opt}(x) &= \sigma \nabla \psi_t(x),\\
\label{eq:fixedpoint-p}
    p_t^{\rm opt}(x) &= \exp(-2 (W*p_t^{\rm opt})(x)+\psi_t(x)+ \phi_t(x)),
\end{align}
where $\psi_t(x) = \psi(t,x)$ and $\phi_t(x)=\phi(t,x)$. Additionally, $\psi_t, \phi_t$ satisfy the Hamilton-Jacobi-Bellman (HJB) equations
\begin{align}\label{eq:HJB-for}
&\partial_t \psi_t(x) + \frac{\sigma^2}{2}  \vert \nabla \psi_t(x) \vert^2 +  \frac{\sigma^2}{2}  \Delta \psi_t(x) = \nonumber \\
&\sigma^2\!\int_{\mathbb R^d}\!\!  \langle\nabla \psi_t(x)-\nabla \psi_t(y),\nabla W(x-y)\rangle p_t^{\rm opt}(y)\, \dy,\\
    &-\partial_t \phi_t(x) +  \frac{\sigma^2}{2} \vert \nabla \phi_t(x)\vert^2 + \frac{\sigma^2}{2}  \Delta \phi_t(x) = \nonumber \\ 
    &  \sigma^2 \!\int_{\mathbb R^d}\!\!\langle\nabla \phi_t(x) - \nabla \phi_t(y),\nabla W(x-y)\rangle p_t^{\rm opt}(y) \, \dy,\label{eq:HJB-back}
\end{align}
where  $\big \langle\cdot,\cdot\rangle$ is the Euclidean inner product, subject to the boundary conditions 
\begin{subequations}
\begin{align}
p_{\rm in}(x) &= \exp(-2 (W*p_{\rm in})(x)+\psi_0(x)+ \phi_0(x)),\\
p_{\rm fin}(x) &= \exp(-2 (W*p_{\rm fin})(x)+\psi_1(x)+ \phi_1(x)).
\end{align} \label{BCfactorization}
\end{subequations}
\end{proposition}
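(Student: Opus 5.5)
The existence and $H_{-1}$ parts of Proposition~\ref{prop:DualHJBsystem} are imported directly from \cite[Theorem~1.3, Corollaries~1.1 and 1.2]{backhoff2020mean}. The work is to check that the scaling $W\mapsto\sigma^2 W$ turns their optimality system into \eqref{eq:optimal-u}--\eqref{BCfactorization}. For existence, I will rewrite \eqref{MFSB_problem} with \eqref{MFSB_dynconstr-W-scaled} as the large-deviation (relative entropy) problem of \cite[Lemma~1.1]{backhoff2020mean]}. The reference dynamics there is the interacting McKean--Vlasov diffusion with potential $\sigma^2 W$, and Girsanov's theorem identifies the entropy cost with $\frac12\int_0^1\int|u_t|^2p_t$. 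Since $\sigma^2W$ still satisfies \ref{A1}--\ref{A3}, their Proposition~1.1/Theorem~1.3 gives a minimizer $(u^{\rm opt},p^{\rm opt})$. Their Corollary~1.1 gives that the optimal drift perturbation lies in the closure of gradients in $L^2(p_t^{\rm opt})$, i.e. $u^{\rm opt}\in H_{-1}$.

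Under the extra regularity and $p^{\rm opt}>0$, I will derive the structure by a first-order variational argument. Since $u^{\rm opt}$ is a smooth element of $H_{-1}$, write $u^{\rm opt}_t=\sigma\nabla\psi_t$, which is \eqref{eq:optimal-u}. Perturbing $p$ and $u$ in the Lagrangian with multiplier for \eqref{MFSB_dynconstr-W-scaled}, the multiplier is proportional to $\psi$. Stationarity then yields the forward HJB equation. Its nonlocal right-hand side comes from differentiating the term $\int\int p_t(x)\langle\nabla\psi_t(x),\nabla W(x-y)\rangle p_t(y)$ in $p$. Using symmetry \ref{A2}, so that $\nabla W$ is odd, the two contributions combine into $\langle\nabla\psi_t(x)-\nabla\psi_t(y),\nabla W(x-y)\rangle$, giving \eqref{eq:HJB-for}.

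Next I define $\phi_t:=\log p_t^{\rm opt}+2W*p_t^{\rm opt}-\psi_t$, which is well-defined by positivity. This makes \eqref{eq:fixedpoint-p} hold by construction, and evaluating at $t=0,1$ with \eqref{MFSB_bc} gives \eqref{BCfactorization}. It then remains to show that $\phi$ satisfies \eqref{eq:HJB-back}. To do this I substitute $\log p_t=\psi_t+\phi_t-2W*p_t$ into \eqref{MFSB_dynconstr-W-scaled}, divide by $p_t$, and use \eqref{eq:HJB-for} to eliminate $\partial_t\psi_t$.

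The main obstacle is this last computation. The quadratic terms $\frac{\sigma^2}{2}|\nabla\log p|^2$ and the cross terms involving $\nabla W*p$ must cancel exactly, so that only the symmetric nonlocal expression in $\phi$ remains. The term $\partial_t(W*p_t)$ must also be rewritten via \eqref{MFSB_dynconstr-W-scaled} and integrated by parts, producing $\int\langle\nabla\phi_t(y)\ldots\rangle p_t(y)\,\dy$-type terms. The factor $2$ in front of $W*p$ is exactly what makes these cancel after $W\mapsto\sigma^2W$, which is why the scaling was chosen. I would carry this out carefully, checking the bookkeeping against \cite[Corollary~1.2]{backhoff2020mean}, where the unscaled version of the same identity is stated.
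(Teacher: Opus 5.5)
The paper gives no argument of its own for this proposition. All of it (existence, $u^{\rm opt}\in H_{-1}$, and \eqref{eq:optimal-u}--\eqref{BCfactorization}) is quoted from \cite[Theorem~1.3, Corollaries~1.1 and 1.2]{backhoff2020mean}. The only ``adaptation'' is this: after $W\mapsto\sigma^2W$ the uncontrolled generator is $\sigma^2\big(\tfrac12\Delta-\langle\nabla W*p_t,\nabla\,\cdot\,\rangle\big)$. The time change $s=\sigma^2t$ (with $\tilde u_s=u_t/\sigma$, horizon $\sigma^2$, same cost) therefore maps the problem to the unit-noise setting of the reference. That is why the weight $e^{-2W*p}$ survives and the HJB equations simply acquire the factor $\sigma^2$.

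You instead import only existence and the $H_{-1}$ property, and re-derive the optimality system yourself. Your computations are correct:
\begin{itemize}
\item The $p$-variation of $\sigma^2\iint p_t(x)\langle\nabla\psi_t(x),\nabla W(x-y)\rangle p_t(y)$, with $\nabla W$ odd, gives exactly the kernel of \eqref{eq:HJB-for}.
\item Set $\phi_t:=\log p_t+2W*p_t-\psi_t$ and $G_f(x):=\int\langle\nabla f(y),\nabla W(x-y)\rangle p_t(y)\,\mathrm{d}y$. Dividing \eqref{MFSB_dynconstr-W-scaled} by $p_t$ and eliminating $\partial_t\psi_t$ leaves $\partial_t\phi_t-\tfrac{\sigma^2}{2}|\nabla\phi_t|^2-\tfrac{\sigma^2}{2}\Delta\phi_t+\sigma^2\langle\nabla\phi_t,\nabla W*p_t\rangle-\sigma^2G_{\psi}-2\partial_t(W*p_t)=0$.
\item Integrating by parts (assuming enough decay) gives $\partial_t(W*p_t)=\tfrac{\sigma^2}{2}(G_\phi-G_\psi)$. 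The $\psi$-terms then cancel and \eqref{eq:HJB-back} follows.
\end{itemize}
The factor $2$ is what kills both the $|\nabla W*p_t|^2$ terms and the $\nabla W*p_t$ contributions inside $\partial_t(W*p_t)$.

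What your route buys is a transparent check that, given \eqref{eq:HJB-for}, the factorization \eqref{eq:fixedpoint-p} together with \eqref{eq:HJB-back} is just a rewriting of the Fokker--Planck constraint. The paper leaves this implicit.

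What it does not buy is rigor for the first-order condition itself. The constraint is quadratic in $p$ and the problem is nonconvex, so existence of a Lagrange multiplier is not automatic; it needs a constraint qualification in suitable function spaces. Hence \eqref{eq:HJB-for} is only formally derived. To close that step you would have to fall back on \cite[Theorem~1.3, Corollary~1.1]{backhoff2020mean}, exactly as the paper does, so your computation effectively replaces only Corollary~1.2.

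Similarly, for transferring existence, the operative point is the time change above, not the observation that $\sigma^2W$ satisfies \ref{A1}--\ref{A3}. The reference's results are stated for normalized noise, and it is the time change that puts your $\sigma$-noise problem into that form.
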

Eq.~\eqref{eq:fixedpoint-p} shows $p_t^{\rm opt}$ as a product of positive factors, analogous to the non-interacting SB solutions. Next, we use this to obtain a generalized Schr\"odinger system in a positive tuple $(p_t^{\rm opt}, \varphi_t, \hat \varphi_t)$ for the interacting setting.


\section{Schr\"{o}dinger System and its Solution}\label{sec:SchrodingerSystemAndItsSolution}
\subsection{Hopf-Cole Transform}\label{subsec:GeneralizedHopfCole}
We now transcribe the coupled system of equations~\eqref{eq:fixedpoint-p}-\eqref{BCfactorization} into a \emph{Schr\"{o}dinger system}, for which we will design a generalized Sinkhorn recursion for numerical computation. To this end, we use the \emph{Hopf-Cole transform} \cite{hopf1950partial,cole1951quasi}:
\begin{align}
(\psi_t,\phi_t)\mapsto(\varphi_t,\hat\varphi_t):=(\exp \psi_t, \exp \phi_t).
\label{defHopfCole}    
\end{align}
Combining \eqref{eq:fixedpoint-p} and \eqref{defHopfCole}, we get
\begin{subequations}\label{eq:Sch-sys}
\begin{align}\label{eq:update-p_t}
    p_t^{\rm opt}(x)  &= e^{-2 (W*p_t^{\rm opt})(x)} \varphi_t(x) \hat \varphi_t(x).
    \end{align}
Let  $b_t^{\rm opt}:=  - (\nabla W * p_t^{\rm opt})$, and for $\xi_t>0$, let
\begin{align*}
 \!\! \!\!\!  Q_t(\xi_t, p_t)(x)\!:= -\!\!\int_{\mathbb R^d}\!\! \big \langle \nabla\log \xi_t(y),\!\nabla W(x-y)\big\rangle \, p_t(y)\dy.
\end{align*}
Then, by \eqref{eq:HJB-for},\eqref{defHopfCole} and the identity $\vert \nabla (\log \varphi_t) \vert^2 +  \Delta(\log \varphi_t) = \Delta \varphi_t/\varphi_t$, one can verify that $\varphi_t$ satisfies
\begin{align}
 \partial_t \varphi_t \! + \! \sigma^2 \langle b_t^{\rm opt}  ,  \nabla \varphi_t\rangle \!+\! \frac{\sigma^2}{2}  \Delta \varphi_t \! = \! \sigma^2 Q_t(\varphi_t,p_t^{\rm opt}) \varphi_t.\label{eq:backward-MV}
\end{align}
Analogously, from \eqref{eq:HJB-back} and \eqref{defHopfCole}, $\hat \varphi_t$ satisfies
\begin{align}\label{eq:forward-MV}
 \!\! \partial_t \hat\varphi_t \!-\!\sigma^2  \langle b_t^{\rm opt} , \nabla \hat \varphi_t \rangle \! -\! \frac{\sigma^2}{2}  \Delta \hat\varphi_t \!= \! - \sigma^2 Q_t(\hat \varphi_t, p_t^{\rm opt}) \hat \varphi_t.
\end{align}
Eqs.~\eqref{eq:update-p_t}-\eqref{eq:forward-MV} are subject to
 \begin{align}
      e^{-2 (W*p_{\rm in})(x)}\varphi_0(x) \hat \varphi_0(x) = p_{\rm in}(x), \\ 
      e^{-2 (W*p_{\rm fin})(x)}\varphi_1(x) \hat \varphi_1(x) = p_{\rm fin}(x). \label{eq:BC-final}
 \end{align}
\end{subequations}

The system of equations \eqref{eq:Sch-sys} constitutes a nonlinear, coupled-through-time \emph{Schr\"odinger system with mean-field interactions}. As expected, for $W =0$, \eqref{eq:Sch-sys} reduces to the classical Schr\"odinger system. To numerically solve \eqref{eq:Sch-sys}, we next propose a generalized Sinkhorn algorithm that handles the nonlinearities in the drift and source/sink (reaction) terms in \eqref{eq:backward-MV}-\eqref{eq:forward-MV}.


\begin{algorithm}[t!] 
\setlength{\intextsep}{2pt}
    \caption{Mean-Field Sinkhorn}
    \label{alg}
     \textbf{Inputs} $p_{\rm in}$, $p_{\rm fin}$, $W$, $\sigma$, $p^{(0)}$ with $p_0^{(0)} = p_{\rm in}$ and $p_1^{(0)} = p_{\rm fin}$, $\varphi^{(0)},\hat\varphi^{(0)}$, $N_1$, $N_2, N_3$, \texttt{tol}.\\
     \textbf{Outputs} $p,\varphi,\hat\varphi$.
    \begin{enumerate}
   \item \textbf{for} $k = 0, \, \cdots, {N_1}$ 
   \item Compute $b^{(k)} = -  \nabla W*p^{(k)}$
        \item Set $\varphi^0 \leftarrow  \varphi^{(k)}$ and $\hat \varphi^0 \leftarrow  \hat \varphi^{(k)}$.
      \item \textbf{for} $j = 0, \, \cdots, {N_2}$ 
      \item Compute $Q_t(\varphi_t^j,p_t^{(k)})$ and $Q_t(\hat\varphi_t^j,p_t^{(k)})$
       \item Set $\varphi_1^{\{0\}} \leftarrow  \varphi_1^j$.
    \item \textbf{for} $i = 0, \, \cdots, {N_3}$
     \item Integrate backward in time
    \begin{subequations}
     \begin{align}
        \partial_t \varphi_t^{\{i+1\}} +\sigma^2 \langle b_t^{(k)}  &, \nabla \varphi_t^{\{i+1\}} \rangle  + \frac{\sigma^2}{2}  \Delta \varphi_t^{\{i+1\}}  \nonumber \\
        & = \sigma^2 Q_t(\varphi_t^j,p_t^{(k)}) \varphi_t^{\{i+1\}},  \label{eq:backward-alg}
     \end{align}
     with  $\varphi_{t=1}^{\{i+1\}}(x) =\varphi_1^{\{i\}}(x)$.
\item Compute
 \begin{align}\label{eq:init-BC-alg}
     \hat \varphi_0^{\{i+1\}} (x) =  \frac{p_{\rm in}(x)}{\varphi_{0}^{\{i+1\}}(x)} e^{2(W*p_{\rm in})(x)}.
 \end{align}
\item  Integrate forward in time
              \begin{align}\label{eq:forward-alg}
           \partial_t \hat\varphi_t^{\{i+1\}} -\sigma^2  \langle b_t^{(k)} &, \nabla  \hat\varphi_t^{\{i+1\}}\rangle   -\frac{\sigma^2}{2}  \Delta \hat\varphi_t^{\{i+1\}}  \nonumber \\
           & = -\sigma^2 Q_t(\hat\varphi_t^j,p_t^{(k)}) \hat\varphi_t^{\{i+1\}}, 
    \end{align}
    with $\hat \varphi_{t=0}^{\{i+1\}}(x) = \hat \varphi_0^{\{i+1\}}(x)$.
 \item    Compute
     \begin{align}\label{eq:final-BC-alg}
           \varphi_1^{\{i+1\}} (x) =  \frac{p_{\rm fin}(x)}{\hat\varphi_{1}^{\{i+1\}}(x)} e^{2(W*p_{\rm fin})(x)}.
     \end{align}
      \end{subequations}
  \item   \textbf{if} $d_H(\varphi_1^{\{i\}},\varphi_1^{\{i+1\}}) <\texttt{tol}$, Break; \textbf{end if}
   \item \textbf{end for}
   \item Set $ \varphi^{j+1} \leftarrow  \varphi^{\{i+1\}}, \hat\varphi^{j+1} \leftarrow \hat\varphi^{\{i+1\}}$
   \item  \textbf{if} $d_H^\oplus\big((\varphi^{j+1}, \hat \varphi^{j+1}), (\varphi^j, \hat \varphi^j)\big) \! <\!\texttt{tol}$, Break;  \textbf{end if}
   \item \textbf{end for}
 \item     Set $\varphi^{(k+1)} \leftarrow \varphi^{j+1}$ and $\hat\varphi^{(k+1)} \leftarrow \hat\varphi^{j+1}$.
  \item   Compute $\alpha_t (p^{(k)}) = \int e^{-2(W*p_t^{(k)})} \varphi_t^{(k+1)} \hat \varphi_t^{(k+1)} \,\dx$,
      \begin{align} \label{eq:iteration-nl}
      \!\! \!\!\!\!\! \!\!\!\!   p_t^{(k+1)}  = \frac{1}{\alpha_t(p^{(k)})} e^{-2(W*p_t^{(k)})} \varphi_t^{(k+1)} \hat \varphi_t^{(k+1)}. 
     \end{align}
      \item   \textbf{if} $d_H^\mathcal{S}(p^{(k+1)},p^{(k)}) <\texttt{tol}$, Break; \textbf{end if}
      \item \textbf{end for}
      \end{enumerate} 
\end{algorithm}

\subsection{Generalized Sinkhorn Algorithm}\label{subsec:GeneralizedSinkhorn}
We introduce Algorithm~\ref{alg}, which consists of nested fixed-point iterations. We emphasize that notation such as $p = (p_t)_{t\in [0,1]}$ represents trajectories/curves over time.
The initial guess $(p^{(0)}, \varphi^{(0)}, \hat{\varphi}^{(0)})$ of Algorithm~\ref{alg} can be taken as the solution of the classical (non-interacting) SB problem.

The innermost Sinkhorn-type iteration, with index $i$, solves \eqref{eq:backward-MV}-\eqref{eq:BC-final} with fixed $p_t = p_t^{(k)}$ and fixed reaction coefficients $Q_t(\varphi_t=\varphi_t^j,p_t^{(k)}),Q_t(\hat\varphi_t =\hat \varphi_t^{j},p_t^{(k)})$. Hence, this step consists of time-marching the solution of the backward-forward linear Kolmogorov equations. The terms ``backward" and ``forward" refer to the time direction in which the corresponding PDE is well-posed according to the sign of the diffusion term $\Delta(\cdot)$.
Upon completing the innermost iteration, we obtain trajectories $(\varphi,\hat \varphi)$ that are used to compute the next reaction coefficients for the same $p_t^{(k)}$. 
The outer iteration in Algorithm~\ref{alg} updates $p_t^{(k)}$ using \eqref{eq:iteration-nl}. At its convergence, $p^{(k+1)} = p^{(k)}$ in the outer fixed-point metric. This resulting $p^{(k+1)}$ is a PDF that is consistent with the prescribed marginals and satisfies the fixed-point relation $\eqref{eq:iteration-nl}$, with $\varphi, \hat \varphi$ solving the backward-forward PDEs in which $p^{(k+1)}$ appears in the drift and reaction terms. Upon the algorithm convergence, one must verify that $\alpha_t(p^{(k)})=1$ for all $t \in [0,1]$, or alternatively, add $\vert \alpha_t(p^{(k)})-1 \vert \approx 0 \ \forall t\in [0,1]$ as a stopping criterion, to ensure that \eqref{eq:fixedpoint-p} is satisfied at the algorithm's fixed point. When the condition on $\alpha_t$ holds, the output of the algorithm gives the tuple $(p_t,\varphi_t, \hat \varphi_t)$ that satisfies the Schr\"odinger system \eqref{eq:Sch-sys}.
We explain the termination criteria involving the distances $d_H(\cdot,\cdot),d_H^\oplus(\cdot, \cdot)$, and $d_H^{\mathcal S}(\cdot,\cdot)$ in Section \ref{subsec:ConvergenceThmProof}.

\subsection{Convergence Analysis}\label{subsec:ConvergenceThmProof}
Here, we provide sufficient conditions for the convergence of Algorithm~\ref{alg}. We consider a compact set $\Omega \subset \mathbb R^d$ and the Banach space $L^\infty(\Omega)$ with norm
$\Vert \cdot \Vert_\infty := \sup_{x \in \Omega} \vert \cdot \vert$. Throughout, $\sup$ or $\inf$ are understood as ${\rm ess} \sup$ and ${\rm ess} \inf$. The set $\Omega$ is considered the spatial domain over which Algorithm~\ref{alg} is applied. Let $\mathcal K := \{ f \in L^\infty(\Omega): f\geq  0 \text{ a.e.}\}$ be the cone of non-negative functions in $L^\infty(\Omega)$, and let $\mathcal K_0$ be the interior of $\mathcal K$. The \emph{Hilbert metric} $d_H$ \cite{bushell1973hilbert,georgiou2015positive} between $f,g \in \mathcal K_0$ is
\begin{align}\label{eq:Hilbert-metric-def}
 \!\! \!\!  d_H(f,g) := \log \sup_{x \in  \Omega}\frac{f(x)}{g(x)}- \log \inf_{x \in \Omega} \frac{f(x)}{g(x)}.
\end{align}
The Hilbert metric is \emph{projective} because $d_H(f, g) =0$ when $f = cg$ for any $c > 0$. 

We say a positive map $\mathcal C: \mathcal K_0 \to \mathcal K_0$ is \emph{locally contractive with respect to the Hilbert metric} near $f \in \mathcal K_0$ if there exists a neighborhood $D \subset \mathcal K_0$  containing  $f$ and a constant $\lambda\in[0,1)$ such that
\begin{align*}
    d_H(\mathcal C(g), \mathcal C(h)) \leq \lambda\, d_H(g,h) \quad \forall g,h \in D.
\end{align*}
Whenever the previous inequality holds for all $g, h \in \mathcal K_0$, the map $\mathcal C$ is \emph{globally contractive}.

For fixed drift and reaction coefficients, the innermost loop of Algorithm~\ref{alg} is a composition of the two positive linear maps: $\varphi_1^{\{i+1\}} \mapsto \varphi_0^{\{i+1\}}$, $\hat \varphi_0^{\{i+1\}} \mapsto \hat\varphi_1^{\{i+1\}}$, defined by the Kolmogorov propagators, and the two scaled inversions in \eqref{eq:init-BC-alg},\eqref{eq:final-BC-alg}. The scaled inversions are isometries, while the Kolmogorov propagators are positive with bounded kernels; hence, by the Birkhoff-Bushell theorem \cite[Theorem 3.1]{bushell1973hilbert}, they are globally contractive, all with respect to the Hilbert metric. Accordingly, their composition is globally contractive, and iterates of this composition converge globally in $d_H(\cdot,\cdot)$ to a unique fixed point. This aligns with contraction analysis for the classical Sinkhorn; see \cite{chen2016entropic}.
Thus, it remains to investigate the convergence of the loops indexed by $j,k$.

To this end, we consider the space of positive trajectories
$\mathcal A := \{f = (f_t)_{t\in[0,1]}: f_t \in  \mathcal K_0 \ \forall t \in [0,1] \},$
and the space of positive probability density trajectories
$\mathcal S := \{ h = (h_t)_{t\in[0,1]}: h_t \in \mathcal K_0, \int_\Omega h_t(x) \, \dx =1 \ \forall t \in [0,1] \},$
with $\mathcal S \subset \mathcal A$. Here and below, we use $\sup_{t}$ to denote $\sup_{t\in [0,1]}$, for brevity.
We let 
\begin{align}
    d_H^{\mathcal S} (p,q) := \sup_t d_H(p_t,q_t) \quad\forall p, q \in \mathcal S.
\end{align}
For $\boldsymbol{\varphi} = (\varphi,\hat\varphi), \boldsymbol{\mu} = (\mu,\hat\mu) \in \mathcal A \times \mathcal A$, we define 
\begin{align}\label{eq:dH-prod}
  \!\!\!  d_H^{\oplus}  (\boldsymbol{\varphi} ,\boldsymbol{\mu}) \!  := \! \max \big\{ \sup_{t}  d_H(\varphi_t,\mu_t) ,  \sup_t  d_H(\hat \varphi_t, \hat \mu_t)  \big \}.
\end{align}
We construct the map $\mathcal C: \mathcal S \to  \mathcal S$, expressed as
\begin{align}
    (\mathcal{C}(p))_t &:= \frac{1}{\alpha_t(p)} e^{-2(W*p_t)} \, \varphi_t^\star(p) \, \hat \varphi_t^\star(p) \quad \forall t \in [0,1],\label{eq:map c}
\end{align}
where $\varphi^\star(p), \hat \varphi^\star(p)$ are the limits of the intermediate loop for a fixed $p$ and $\alpha_t(p) = \int_\Omega e^{-2(W*p_t)} \, \varphi_t^\star(p) \, \hat \varphi_t^\star(p) \, \dx$. For a fixed $p \in \mathcal S$, we define $\mathcal G_p: \mathcal A \times \mathcal A \to \mathcal A \times \mathcal A$ by
\begin{align} \label{eq:map g}
    \mathcal G_p(\boldsymbol{\varphi}^{j}(p))= \boldsymbol{\varphi}^{j+1}(p), \quad \boldsymbol{\varphi}^{j}(p)= (\varphi^j(p), \hat \varphi^j(p)).
\end{align}
 We next study the local convergence of the iterations induced by $\mathcal{C}$, i.e., the outer loop in Algorithm~\ref{alg} (Theorem~\ref{thm:k-iter}) with respect to $d_H^{\mathcal S}(\cdot, \cdot)$ and by $\mathcal G_{p}$, i.e., the intermediate loop, with respect to $d_H^\oplus(\cdot, \cdot)$ (Theorem~\ref{thm:j-iter}), and then the overall local convergence for Algorithm~\ref{alg} (Theorem~\ref{thm:overall}).

\begin{theorem}\label{thm:k-iter}
Let $p^\star$ be a fixed point of the map $\mathcal C$ defined in \eqref{eq:map c}. For $r>0$, consider the ball $B_r(p^\star) := \{ p \in \mathcal S: d_H^{\mathcal S}(p,p^\star) \leq r\}.$
In addition to \ref{A1}-\ref{A3}, we make the following assumptions \ref{Assump.W}-\ref{Assump.q_t} on all $p,q \in B_r(p^\star)$ for some nonnegative constants $a_1(r),a_2(r),a_3(r),c_1(r),c_2(r)<\infty$.
    \begin{enumerate}[label=\textbf{B\arabic*}]
    \item   $W,\nabla W, \Delta W$ are  in $L^\infty$. \label{Assump.W}
    \item $\sup_{t}  \Vert \nabla \psi_t^\star(p) \Vert_\infty  \leq a_1$ and $\sup_{t}  \Vert \nabla \phi_t^\star(p) \Vert_\infty  \leq a_2$.\label{Assump.psi_t}
    \item $\Vert\psi_1^\star(p)-\psi^\star_1(q) \Vert_\infty \leq c_1 \sup_{t} \Vert  p_t - q_t\Vert_1$ and $\Vert \phi_0^\star(p)-\phi_0^\star(q)\Vert_\infty \leq c_2 \sup_{t}\Vert  p_t - q_t \Vert_1$. \label{Assump.phipsi-boundary}
    \item $\sup_{t}  \Vert\nabla p_t \Vert_1 < a_3$. \label{Assump.q_t}
    \end{enumerate} 
Here $\psi_t^\star (\cdot)= \log \varphi_t^\star(\cdot)$  and $\phi_t^\star (\cdot)= \log \hat\varphi_t^\star(\cdot)$. Let $W = \beta \overline{W}$ for some scaling $\beta >0$, and let the constant
     \begin{align*}
        \lambda :=  2e^{2r+1} \big[\frac{2\beta \Vert \overline{W} \Vert_{\infty}}{e} \! +\! \frac{2\sigma^2 (a_1\!+\!a_2) \beta \Vert \nabla \overline{W} \Vert_{\infty} \!+\! c_1 \!+ \!c_2}{1-e \sigma^2 \beta Z} \big],
     \end{align*}
   for $Z:= \Vert \Delta \overline{W}\Vert_\infty + a_3 \Vert \nabla \overline{W}\Vert_\infty$ with $\sigma^2 \beta Z<e^{-1}$.
     If $\lambda<1$, then the map $\mathcal C$ is locally contractive near $p^\star$ such that $d_H^{\mathcal S} \big(\mathcal C(p), \mathcal C(q)\big) \leq \lambda d_H^{\mathcal S}(p,q) \ \forall p,q \in B_r(p^\star)$.
Additionally, for every $p^{(0)} \in B_r(p^\star)$, the iteration $p^{(k+1)} = \mathcal C(p^{(k)}) \to p^\star$ as $k \to \infty$, and $p^\star$ is the unique fixed point in $B_r(p^\star)$. 
\end{theorem}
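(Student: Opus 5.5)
We take logarithms and work term by term. Since $d_H$ is projective, the normalization $1/\alpha_t(p)$ in \eqref{eq:map c} drops out, and for each $t$
\begin{align*}
\log(\mathcal C(p))_t-\log(\mathcal C(q))_t \;\dotminus\; -2W*(p_t-q_t) + \big(\psi_t^\star(p)-\psi_t^\star(q)\big) + \big(\phi_t^\star(p)-\phi_t^\star(q)\big),
\end{align*}
where $\dotminus$ means equality up to an additive constant. Using $d_H(f,g)\le 2\Vert\log f-\log g\Vert_\infty$, it suffices to bound three sup-norms:
\begin{itemize}
\item $\Vert W*(p_t-q_t)\Vert_\infty\le \beta\Vert\overline W\Vert_\infty\Vert p_t-q_t\Vert_1$;
\item $\sup_t\Vert\psi_t^\star(p)-\psi_t^\star(q)\Vert_\infty$;
\item $\sup_t\Vert\phi_t^\star(p)-\phi_t^\star(q)\Vert_\infty$.
\end{itemize}
Each will be bounded by a multiple of $\sup_t\Vert p_t-q_t\Vert_1$. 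We then convert back via the standard estimate for densities, $\Vert p_t-q_t\Vert_1\le e^{d_H(p_t,q_t)}-1\le e^{r}\,d_H(p_t,q_t)$ on the ball; this holds because both densities integrate to one, so $\inf(p_t/q_t)\le1\le\sup(p_t/q_t)$. Together with exponential factors of the type $e^{2r+1}$ coming from the same comparisons, this produces the prefactor in $\lambda$.

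For the potential differences, the plan is to write the HJB equations \eqref{eq:HJB-for}--\eqref{eq:HJB-back} for $\psi^\star(p)$ and $\psi^\star(q)$, with $p$ and $q$ in the drift and reaction terms, and subtract them. The difference $\delta\psi:=\psi^\star(p)-\psi^\star(q)$ then satisfies a linear parabolic equation. Its transport coefficient involves $\nabla\psi^\star(p)+\nabla\psi^\star(q)$ and $\nabla W*p$, and it has source terms of two kinds:
\begin{itemize}
\item terms linear in $p-q$, namely $\sigma^2\int\langle\nabla\psi^\star(q)(x)-\nabla\psi^\star(q)(y),\nabla W(x-y)\rangle(p-q)(y)\,\mathrm{d}y$, bounded by $2\sigma^2a_1\beta\Vert\nabla\overline W\Vert_\infty\Vert p_t-q_t\Vert_1$ using \ref{Assump.psi_t} and \ref{Assump.W};
\item nonlocal terms in $\delta\psi$ itself, $\int\langle\nabla\delta\psi(y),\nabla W(x-y)\rangle p(y)\,\mathrm{d}y$, which we integrate by parts to $-\int\delta\psi(y)\big[\Delta W(x-y)p(y)-\nabla W(x-y)\cdot\nabla p(y)\big]\mathrm{d}y$ (with the appropriate sign). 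By \ref{Assump.W} and \ref{Assump.q_t} this is bounded by $\beta Z\Vert\delta\psi_t\Vert_\infty$.
\end{itemize}
A maximum principle (Feynman--Kac) argument on $[t,1]$, with terminal data controlled by \ref{Assump.phipsi-boundary} ($\Vert\delta\psi_1\Vert_\infty\le c_1\sup_t\Vert p_t-q_t\Vert_1$), then gives
\[
\sup_t\Vert\delta\psi_t\Vert_\infty\le e\big(c_1+2\sigma^2a_1\beta\Vert\nabla\overline W\Vert_\infty\sup_t\Vert p_t-q_t\Vert_1\big)+e\sigma^2\beta Z\sup_t\Vert\delta\psi_t\Vert_\infty .
\]
The factor $e$ absorbs a Gronwall-type exponential on the unit time horizon. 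Since $e\sigma^2\beta Z<1$, we can absorb the last term, which yields the denominator $1-e\sigma^2\beta Z$. The same argument forward in time for $\phi$, using $c_2$ and $a_2$, gives the symmetric bound. Summing the three contributions gives $d_H^{\mathcal S}(\mathcal C(p),\mathcal C(q))\le\lambda\,d_H^{\mathcal S}(p,q)$.

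The main obstacle is this subtracted-HJB estimate. The quadratic gradient term has to be handled by linearization (written as $\langle\nabla\psi^\star(p)+\nabla\psi^\star(q),\nabla\delta\psi\rangle$, so that it becomes a transport term that the maximum principle ignores), and the nonlocal term in $\nabla\delta\psi$ has to be converted into a zeroth-order term through integration by parts. This is exactly where \ref{Assump.q_t} and the smallness condition $\sigma^2\beta Z<e^{-1}$ are needed.

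Given the contraction, the remaining claims follow from a Banach fixed-point argument on $B_r(p^\star)$. Since $p^\star=\mathcal C(p^\star)$, we have $d_H^{\mathcal S}(\mathcal C(p),p^\star)\le\lambda r\le r$, so $\mathcal C$ maps the ball into itself. The iterates satisfy $d_H^{\mathcal S}(p^{(k)},p^\star)\le\lambda^k r\to0$. Uniqueness holds because two fixed points $p^\star$ and $q$ would satisfy $d_H^{\mathcal S}(p^\star,q)\le\lambda d_H^{\mathcal S}(p^\star,q)$ with $\lambda<1$. Finally, $d_H^{\mathcal S}$ is a genuine metric on $\mathcal S$, since normalization removes projective ambiguity; completeness of the ball is handled by the equivalence with the $L^\infty$ topology on log-densities, so convergence to $p^\star$ follows.
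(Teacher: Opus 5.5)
Your proposal follows the paper's proof essentially step for step. The shared steps are: removing $\alpha_t$ by projectivity and splitting into three terms (your oscillation-of-logs argument is the paper's inequality $d_H(p_1p_2,q_1q_2)\le d_H(p_1,q_1)+d_H(p_2,q_2)$); the bounds $d_H\le 2\Vert\log f-\log g\Vert_\infty$ and $\Vert p_t-q_t\Vert_1\le e^{d_H(p_t,q_t)}-1$; subtracting the HJB equations with the quadratic term linearized into transport; integrating by parts to turn the nonlocal gradient term into a zeroth-order term controlled by $\sigma^2\beta Z$; the maximum principle with factor $e$ (the paper cites Lieberman, Thm.~2.10); absorption using $e\sigma^2\beta Z<1$; and the contraction/fixed-point conclusion. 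Your split of the source term differs cosmetically from the paper's $L_t+M_t+R_t$. You put $\nabla\psi^\star(q)$ at both $x$ and $y$ and weight the $\delta\psi$ term by $p$, whereas the paper's $M_t$ carries $\nabla\psi^\star(p)(y)$ and $R_t$ is weighted by $q$. Both give the same constant $2\sigma^2 a_1\beta\Vert\nabla\overline W\Vert_\infty$.

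Three points need fixing.

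\textbf{The conversion factor.} It must be $e^{2r}$, not $e^{r}$. For $p,q\in B_r(p^\star)$ you only know $d_H(p_t,q_t)\le 2r$, and $e^{x}-1\le e^{r}x$ fails for $x$ near $2r$: with $r=1$, $x=2$ one has $e^2-1\approx 6.39>2e\approx 5.44$. Use $e^{x}-1\le xe^{x}\le e^{2r}x$ instead. Then the factor $e^{2r+1}$ in $\lambda$ is exactly $e^{2r}$ from this conversion times the $e$ from the maximum principle; for the interaction term, the $1/e$ inside the bracket cancels that $e$. There are no further exponential factors ``from the same comparisons.''

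\textbf{A typo.} In your displayed estimate, $c_1$ must also multiply $\sup_t\Vert p_t-q_t\Vert_1$.

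\textbf{The spatial boundary.} You are on the compact domain $\Omega$. There the maximum principle bounds $\delta\psi$ by its values on the whole parabolic boundary $(\Omega\times\{1\})\cup(\partial\Omega\times(0,1))$, not only by the terminal data. Your integration by parts also produces a boundary term $\int_{\partial\Omega}\delta\psi\, p_t\,\nabla W(x-y)\cdot n\,\mathrm{d}S$. Both vanish only if $\varphi^\star(p)$ and $\varphi^\star(q)$ (and likewise $\hat\varphi^\star$) satisfy the same Dirichlet data on $\partial\Omega$, so that $\delta\psi=0$ there. The paper imposes exactly this condition in its proof, and your argument needs it stated as well.
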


\begin{proof}
Letting $\mathcal E_t(p) := e^{-2(W*p_t)}$ $\forall t \in [0,1]$, we write
$(\mathcal C(p))_t = \alpha_t(p)^{-1}\mathcal E_t(p) \, \varphi_t^\star(p) \, \hat \varphi_t^\star(p).$
From the definition in \eqref{eq:Hilbert-metric-def}, one can verify that $d_H(p_1 p_2, q_1q_2) \leq d_H(p_1,q_1) + d_H( p_2, q_2)$ for all $p_{i},q_i \in \mathcal K_0, i \in\{1,2\}$. Accordingly,
\begin{align}\label{eq:Big-Inequality}
   & d_H^{\mathcal S} \big(\mathcal C(p),  \mathcal C(q)\big) \leq \sup_{t} d_H\big(\mathcal E_t(p), \mathcal E_t(q)\big) \nonumber \\
&+\sup_{t} d_H\big(\varphi_t^\star(p), \varphi_t^\star(q)\big) \! + \!\sup_{t} d_H\big(\hat\varphi_t^\star(p), \hat \varphi_t^\star(q)\big).
\end{align}
We now investigate the terms on the right-hand side of \eqref{eq:Big-Inequality} individually to obtain their respective upper bounds in terms of $d_H^{\mathcal S}(p,q)$.

For all $f,g \in \mathcal K_0$, two pertinent inequalities are
\begin{align}
     d_{H} (f,g) 
     &\leq 2 \Vert \log  f - \log g \Vert_\infty,         \label{eq:infty-H}  \\
    \Vert f -g \Vert_1 &\leq  e^{d_H(f,g)} -1 , \quad \text{when  } \Vert f \Vert_1 = \Vert g \Vert_1=1, \label{eq:1-H}
\end{align}
cf. Corollary 1 \cite{birkhoff1957extensions} and Lemma 2.2 \cite{eckstein2025hilbert}. 

Using \eqref{eq:infty-H}, we have that $d_H\big(\mathcal E_t(p), \mathcal E_t(q)\big)  \leq 4 \Vert \beta \big(\overline{W}*(p_t-q_t)\big) \Vert_\infty$. By Young's convolution inequality \cite[Proposition 2.33]{van2022functional} and \ref{Assump.W}, we get $d_H\big(\mathcal E_t(p), \mathcal E_t(q)\big) \leq 4 \beta \Vert \overline{W} \Vert_{\infty}  \Vert p_t-q_t \Vert_1.$
Then, by \eqref{eq:1-H}, we obtain
\begin{align*}
   d_H\big(\mathcal E_t(p), \mathcal E_t(q)\big)  \leq 4 \beta \Vert \overline{W} \Vert_{\infty} \big(e^{d_H(p_t,q_t)} -1 \big).
\end{align*}
As $p,q \in B_r(p^\star)$, then $d_H(p_t,q_t) \leq 2 r$, and \footnote{$e^x-1 \leq x e^x$ for $x \geq 0$.}it follows that $e^{d_H(p_t,q_t)} -1 \leq e^{2r} d_H(p_t,q_t)$. Thus, 
\begin{align}
   \!\!\sup_{t} d_H\big(\mathcal E_t(p), \mathcal E_t(q)\big)  &\leq 4 \beta \Vert \overline{W} \Vert_{\infty} e^{2r} d_H^{\mathcal S}(p,q). \label{eq:bound-E}
\end{align}

Let $\gamma_t := \log \varphi_t^\star(p) - \log \varphi_t^\star(q) \equiv \psi_t^\star(p) - \psi_t^\star(q)$. Using \eqref{eq:infty-H}, we have $d_H\big(\varphi_t^\star(p), \varphi_t^\star(q)\big) \leq 2\Vert \gamma_t \Vert_\infty$, and so
\begin{align}
    \sup_{t} d_H\big(\varphi_t^\star(p), \varphi_t^\star(q)\big) \leq 2\sup_{t}\Vert \gamma_t \Vert_\infty.
    \label{FromHilbertToInfinityNormOfgamma}
\end{align}
Next, we derive an upper bound on $\sup_{t}\Vert \gamma_t \Vert_\infty$, which by \eqref{FromHilbertToInfinityNormOfgamma}, will give an upper bound on $\sup_{t} d_H\big(\varphi_t^\star(p), \varphi_t^\star(q)\big)$.

From \eqref{eq:HJB-for} and by direct calculations, we get
\begin{align}
    \partial_t \gamma_t &+ \frac{\sigma^2}{2} \langle\nabla \psi_t^\star(q)  + \nabla \psi_t^\star(p) -  2(\nabla W*p_t),\nabla \gamma_t\rangle \nonumber \\
    &+    \frac{\sigma^2}{2}  \Delta \gamma_t = L_t + M_t + R_t,
\end{align}
where 
$L_t := \sigma^2\langle\nabla \psi_t^\star(q), \nabla W*(p_t-q_t)\rangle,$
$M_t := \sigma^2 \int_{\Omega} \langle \nabla \psi_t^\star(p) (y) , \nabla W(x-y) \rangle (q_t-p_t)(y) \, \dy,$
and
$R_t := \sigma^2 \int_{\Omega}  \gamma_t(y)  \nabla_y \cdot \big(\nabla W(x-y)\,q_t(y) \big)\, \dy.$
We let $\eta_{s} := \gamma_{1-s}$ for $s \in [0,1]$, i.e., $\eta$ is the time reversal of $\gamma$. Then, $\eta_s$ solves the forward parabolic PDE
\begin{align}
        -\partial_s \eta_{s} &+ \frac{\sigma^2}{2} \langle\nabla \psi^\star_{1-s}(q)  + \nabla \psi^\star_{1-s}(p) - 2(\nabla W*p_{1-s}),\nabla \eta_s\rangle \nonumber \\
    & +    \frac{\sigma^2}{2}  \Delta \eta_s = L_{1-s} \!+ \!M_{1-s} +  R_{1-s}. \label{eq:eta}
\end{align}
Here, we invoke \cite[Theorem 2.10]{lieberman1996second}, which establishes 
\begin{align*}
    \sup_{s \in[0,1]} \Vert \eta_s\Vert_\infty \leq e \Big(\sup_{\mathscr P_f} \vert \eta \vert + \!\! \sup_{s \in[0,1]} \!\!\Vert L_{1-s} \! +\! M_{1-s}\!+\!R_{1-s}\Vert_\infty \Big),
\end{align*}
where $e\approx 2.71828$ is the Euler number, and $\mathscr P_f := (\Omega \times \{0\}) \bigcup (\partial \Omega \times(0,1))$, representing the union of spatial and temporal boundaries.
Equivalently,
\begin{align} \label{eq:liebermann}
     \sup_{t} \Vert \gamma_t\Vert_\infty \leq e \Big(\!\sup_{\mathscr P_b} \vert \gamma \vert \!+ \! \sup_{t} \!\Vert L_t \!+ \! M_t \!+ \!R_t\Vert_\infty \Big),
\end{align}
where $\mathscr P_b := (\Omega \times \{1\}) \bigcup (\partial \Omega \times(0,1))$.

If the same Dirichlet spatial boundary conditions are enforced on $\varphi^\star(p),  \varphi^\star(q)$, then $\sup_{x\in \partial \Omega} \vert \psi_t^\star(p) -\psi_t^\star(q) \vert =0$ for all $t \in [0,1]$  Accordingly, only bounds on the difference of $\psi$ at the temporal boundary, namely, $\Vert \psi_1^\star(p)-\psi_1^\star(q) \Vert_\infty$, remain. By Assumption~\ref{Assump.phipsi-boundary}, 
$\sup_{\mathscr P_b} \vert \gamma \vert  =    \Vert \gamma_1 \Vert_\infty \leq c_1 e^{2r} d_H^{\mathcal S}(p,q).$

Since $\Vert L_t \Vert_\infty \leq \sigma^2 \Vert \nabla \psi_t^\star(q) \Vert_\infty  \Vert \nabla W*(p_t-q_t) \Vert_\infty$, by \ref{Assump.psi_t} and Young's convolution inequality, it holds that $\Vert L_t \Vert_\infty \leq \sigma^2 a_1 \beta \Vert \nabla \overline{W} \Vert_{\infty}  \Vert p_t-q_t \Vert_1.$
By virtue of \eqref{eq:1-H},
    $\sup_{t}\Vert L_t \Vert_\infty \leq \sigma^2 a_1 \beta \Vert \nabla \overline{W} \Vert_{\infty}  e^{2r} d_H^{\mathcal S}(p,q).$
Likewise,
     $\sup_{t}\Vert M_t \Vert_\infty \leq \sigma^2 a_1 \beta \Vert \nabla \overline{W} \Vert_{\infty}  e^{2r} d_H^{\mathcal S}(p,q).$
Further, by \ref{Assump.q_t}, 
     $\sup_{t}\Vert R_t \Vert_\infty \leq \sigma^2  \beta Z \sup_{t} \Vert \gamma_t\Vert_\infty,$
for $Z =  \Vert \Delta \overline{W}\Vert_\infty + a_3 \Vert \nabla \overline{W}\Vert_\infty$.
By the triangle inequality, $\Vert L_{t} \!+ \!M_{t} + R_t\Vert_\infty \leq  \Vert L_{t} \Vert_\infty +\Vert M_{t} \Vert_\infty + \Vert R_t \Vert_\infty$. Hence, substituting the bounds on $\sup_{\mathscr P_b} \vert \gamma \vert$, $\sup_t \Vert L_t \Vert_\infty$, $\sup_t \Vert M_t\Vert_\infty$, and $\sup_t \Vert R_t\Vert_\infty$ into the inequality \eqref{eq:liebermann} gives the sought-after bound on $\sup_{t}\Vert \gamma_t\Vert_\infty$. Then, by \eqref{FromHilbertToInfinityNormOfgamma}, we establish that 
\begin{align}\label{eq:Bound-varphi}
   &\sup_{t} d_H\big(\varphi_t^\star(p), \varphi_t^\star(q)\big) \leq  \nonumber \\
   & \frac{2 e^{2r+1}}{1-e \sigma^2 \beta Z} \big(2\sigma^2 a_1 \beta \Vert \nabla \overline{W} \Vert_{\infty} + c_1\big)  d_H^{\mathcal S}(p,q),
\end{align}
for $\sigma^2 \beta Z<e^{-1}$.
All steps used to obtain the previous relation apply verbatim for $\hat \varphi$ except for the need to reverse the time to invoke \cite[Theorem 2.10]{lieberman1996second}. The result is
\begin{align}\label{eq:Bound-hatvarphi}
   &\sup_{t} d_H\big(\hat \varphi_t^\star(p), \hat\varphi_t^\star(q)\big) \leq  \nonumber \\
   &  \frac{2 e^{2r+1}}{1-e \sigma^2 \beta Z} \big(2\sigma^2 a_2 \beta \Vert \nabla \overline{W} \Vert_{\infty} + c_2\big)  d_H^{\mathcal S}(p,q),
\end{align}
Plugging \eqref{eq:bound-E}, \eqref{eq:Bound-varphi}, and \eqref{eq:Bound-hatvarphi} into \eqref{eq:Big-Inequality} yields the constant $\lambda>0$ in the theorem statement.

As long as $p^{(0)} \in B_r(p^\star)$ and $\lambda<1$, $d_H^{\mathcal S} \big(p^{(1)}, p^\star\big) = d_H^{\mathcal S} \big(\mathcal C(p^{(0)}), \mathcal C(p^\star)\big)  \leq \lambda \, d_H^{\mathcal S} (p^{(0)},p^\star) < r,$ and $p^{(1)}$ stays in $B_r(p^\star)$. Hence, $d_H^{\mathcal S} \big( p^{(k)}, p^\star\big) \leq \lambda^k d_H^{\mathcal S}(p^{(0)}, p^\star) \leq \lambda^k r <r.$
Since $\lambda^k \to 0$ as $k \to \infty$, the fixed-point iteration $p^{(k+1)} = \mathcal C(p^{(k)})$ converges to $p^\star$. If $q^\star$ denotes another fixed point of $\mathcal C$ in $B_r(p^\star)$ and $p^{(0)} = q^\star$, all consecutive iterates of $p^{(0)}$ converge to $p^\star$. This necessitates that $q^\star = p^\star$ and the fixed point of $\mathcal C$ is unique in $B_r(p^\star)$.
\end{proof}

\begin{theorem}\label{thm:j-iter}
Let $\boldsymbol{\varphi}^\star(p) = (\varphi^\star(p), \hat \varphi^\star(p))$ be a fixed point of the map $\mathcal G_p$ defined in \eqref{eq:map g} for a fixed $p$. For $\rho :=\rho(p)>0$, define the ball $V_{\rho}^p :=  \{ \boldsymbol{\varphi}(p)  \in \mathcal A \times \mathcal A : d_H^{\oplus} \big(\boldsymbol{\varphi}(p), \boldsymbol{\varphi}^\star(p)\big) \leq \rho(p)\}.$
Assume that \ref{A1}-\ref{A3} and \ref{Assump.W} hold, and $W = \beta \overline{W}$ for some $\beta>0$. 
Suppose there exist $m_i = m_i(\rho,p) \geq 0, i \in \{1, \cdots, 4\}$ such that the following applies for all $\boldsymbol{\varphi}(p) = (\varphi(p), \hat \varphi(p))$, $\boldsymbol{\mu}(p) = (\mu(p), \hat \mu(p)) \in V_{\rho}^p$ with $\mathcal G_p(\boldsymbol{\varphi}(p))=:(\varphi^+(p), \hat \varphi^+(p))$, $\mathcal G_p(\boldsymbol{\mu}(p)) =:(\mu^+(p), \hat \mu^+(p))$.
\begin{enumerate}[label=\textbf{H\arabic*}]
    \item   $\Vert\log \varphi_1^+ \! - \! \log \mu_1^+ \Vert_\infty   \leq m_1 \, d_H^\oplus(\boldsymbol{\varphi},\boldsymbol{\mu})$ and  $\Vert\log \hat \varphi_0^+ \! - \! \log \hat \mu_0^+ \Vert_\infty  \leq m_2 \, d_H^\oplus(\boldsymbol{\varphi},\boldsymbol{\mu})$.\label{eq:assump-thm2-1}
    \item $\sup_t\Vert \nabla \log   \varphi_t \! - \!\nabla \log \mu_t \Vert_\infty \leq m_3 \, d_H^\oplus(\boldsymbol{\varphi},\boldsymbol{\mu})$ and $\sup_t\Vert \nabla \log   \hat \varphi_t \! - \!\nabla \log \hat\mu_t \Vert_\infty \leq m_4 \, d_H^\oplus(\boldsymbol{\varphi},\boldsymbol{\mu})$. \label{eq:assump-thm2-2}
\end{enumerate}
\begin{align}\label{eq:Lambda_p}
\text{Let } \Lambda_p :=  \max \Big\{ 2e( m_1 &+  m_3\sigma^2 \beta\Vert \nabla \overline{W} \Vert_\infty), \nonumber \\
& 2e(m_2 + m_4\sigma^2 \beta\Vert \nabla \overline{W} \Vert_\infty) \Big\}. 
\end{align}
If ${\Lambda_p<1}$, then the map $\mathcal G_p$ is locally contractive near $\boldsymbol{\varphi}^\star(p)$ with respect to $d_H^\oplus(\cdot, \cdot)$.
Additionally, for any $\boldsymbol{\varphi}^{0}(p) \in V_{\rho}^p$, $\boldsymbol{\varphi}^{j+1}(p) = \mathcal G_p(\boldsymbol{\varphi}^j(p)) \to \boldsymbol{\varphi}^\star(p)$ as $j \to \infty$, and $\boldsymbol{\varphi}^\star(p)$ is the unique fixed point in $V_{\rho}^p$. 
\end{theorem}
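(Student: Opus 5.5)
The plan mirrors the proof of Theorem~\ref{thm:k-iter}, now with $p$ frozen and the reaction coefficients varying. Fix $p$ and take $\boldsymbol{\varphi},\boldsymbol{\mu}\in V_\rho^p$. Since the inner loop converges globally to its unique fixed point, $\varphi^+$ solves the linear backward equation \eqref{eq:backward-alg} with drift $b_t=-\nabla W*p_t$ and reaction coefficient $Q_t(\varphi_t,p_t)$, and $\mu^+$ solves the same equation with reaction coefficient $Q_t(\mu_t,p_t)$. The same holds for the hatted quantities with the forward equation \eqref{eq:forward-alg}. Set $\gamma_t:=\log\varphi_t^+-\log\mu_t^+$. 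By \eqref{eq:infty-H},
\[
\sup_t d_H(\varphi^+_t,\mu^+_t)\le 2\sup_t\Vert\gamma_t\Vert_\infty ,
\]
so it suffices to bound $\sup_t\Vert\gamma_t\Vert_\infty$ linearly in $d_H^\oplus(\boldsymbol{\varphi},\boldsymbol{\mu})$.

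First I derive the PDE for $\gamma$. Writing \eqref{eq:backward-alg} in log form, $\psi^+=\log\varphi^+$ satisfies
\[
\partial_t\psi^+ +\sigma^2\langle b_t,\nabla\psi^+\rangle+\tfrac{\sigma^2}{2}(|\nabla\psi^+|^2+\Delta\psi^+)=\sigma^2Q_t(\varphi_t,p_t).
\]
Subtracting the analogous equation for $\mu^+$ and using $|a|^2-|c|^2=\langle a+c,a-c\rangle$ gives
\[
\partial_t\gamma_t+\tfrac{\sigma^2}{2}\big\langle\nabla\log\varphi^+_t+\nabla\log\mu^+_t+2b_t,\nabla\gamma_t\big\rangle+\tfrac{\sigma^2}{2}\Delta\gamma_t=\sigma^2\big(Q_t(\varphi_t,p_t)-Q_t(\mu_t,p_t)\big).
\]
This is a linear parabolic equation in $\gamma$ whose source does not involve $\gamma$, so there is no $R_t$-type term; this is why no condition like $\sigma^2\beta Z<e^{-1}$ appears. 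Reverse time, $\eta_s=\gamma_{1-s}$, and invoke \cite[Theorem 2.10]{lieberman1996second} exactly as in \eqref{eq:liebermann}:
\[
\sup_t\Vert\gamma_t\Vert_\infty\le e\Big(\sup_{\mathscr P_b}|\gamma|+\sigma^2\sup_t\Vert Q_t(\varphi_t,p_t)-Q_t(\mu_t,p_t)\Vert_\infty\Big).
\]
As before, the common Dirichlet data kill the lateral part of $\mathscr P_b$, and \ref{eq:assump-thm2-1} gives $\Vert\gamma_1\Vert_\infty\le m_1d_H^\oplus(\boldsymbol{\varphi},\boldsymbol{\mu})$.

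Next I bound the source. By definition,
\[
Q_t(\varphi_t,p_t)-Q_t(\mu_t,p_t)=-\int_\Omega\langle\nabla\log\varphi_t(y)-\nabla\log\mu_t(y),\nabla W(x-y)\rangle p_t(y)\dy .
\]
Since $\Vert p_t\Vert_1=1$ and \ref{Assump.W} holds, this is at most
\[
\beta\Vert\nabla\overline W\Vert_\infty\sup_t\Vert\nabla\log\varphi_t-\nabla\log\mu_t\Vert_\infty\le m_3\beta\Vert\nabla\overline W\Vert_\infty\,d_H^\oplus(\boldsymbol{\varphi},\boldsymbol{\mu})
\]
by \ref{eq:assump-thm2-2}. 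Combining the two bounds,
\[
\sup_t d_H(\varphi_t^+,\mu_t^+)\le 2e\big(m_1+m_3\sigma^2\beta\Vert\nabla\overline W\Vert_\infty\big)d_H^\oplus(\boldsymbol{\varphi},\boldsymbol{\mu}).
\]
For $\hat\varphi^+$ I repeat the argument on \eqref{eq:forward-alg}. That equation is already forward parabolic, so no time reversal is needed; the boundary set is $\mathscr P_f$, the initial data are handled by the second half of \ref{eq:assump-thm2-1}, and the source by $m_4$. This yields the second entry of \eqref{eq:Lambda_p}. Taking the maximum of the two estimates gives
\[
d_H^\oplus(\mathcal G_p(\boldsymbol{\varphi}),\mathcal G_p(\boldsymbol{\mu}))\le\Lambda_p\,d_H^\oplus(\boldsymbol{\varphi},\boldsymbol{\mu}).
\]

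The conclusion then follows as at the end of the proof of Theorem~\ref{thm:k-iter}. With $\Lambda_p<1$ and $\mathcal G_p(\boldsymbol{\varphi}^\star)=\boldsymbol{\varphi}^\star$, the iterates stay in $V_\rho^p$ and satisfy $d_H^\oplus(\boldsymbol{\varphi}^j,\boldsymbol{\varphi}^\star)\le\Lambda_p^j\rho\to0$. Any second fixed point in $V_\rho^p$ is a stationary iteration, so it must coincide with $\boldsymbol{\varphi}^\star$.

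I expect the main obstacle to be justifying the use of the maximum principle \cite[Theorem 2.10]{lieberman1996second}. This requires bounded drift coefficients $\nabla\log\varphi^+,\nabla\log\mu^+$, which are not assumed explicitly and would have to be inferred from regularity on $\Omega$. It also requires identical Dirichlet lateral data. A second subtlety is that the $d_H^\oplus$-convergence is projective, so convergence holds only up to scaling; the normalization in \eqref{eq:init-BC-alg}--\eqref{eq:final-BC-alg} should fix this.
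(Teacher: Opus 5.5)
Your proposal is correct and follows essentially the same route as the paper. The log-difference of the two $\mathcal G_p$-outputs solves a linear parabolic equation with source $\sigma^2\big(Q_t(\varphi_t,p_t)-Q_t(\mu_t,p_t)\big)$, which is the paper's $F_t^j$; Lieberman's Theorem~2.10, \ref{eq:assump-thm2-1}, \ref{eq:assump-thm2-2}, Young's inequality and \eqref{eq:infty-H} then give $\Lambda_p$, and the contraction and uniqueness argument is reused from Theorem~\ref{thm:k-iter}.

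One correction to your closing remark. The paper removes the projective scaling ambiguity through the common Dirichlet lateral data, which you already assume. It is not removed by \eqref{eq:init-BC-alg}--\eqref{eq:final-BC-alg}: those only fix the product $\varphi\hat\varphi$ at $t=0,1$, and without lateral data $\mathcal G_p$ commutes with the rescaling $(\varphi,\hat\varphi)\mapsto(c\varphi,\hat\varphi/c)$.
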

\begin{proof}
    Let ${\boldsymbol{\varphi}^{j}(p),\boldsymbol{\mu}^{j}(p) \in V_\rho^p}$ with ${\boldsymbol{\varphi}^{j+1}(p) = \mathcal G_p(\boldsymbol{\varphi}^{j}(p))}$, ${\boldsymbol{\mu}^{j+1}(p) = \mathcal G_p(\boldsymbol{\mu}^{j}(p))}$. Note that $\varphi_t^{j+1}, \mu_t^{j+1}$ are limits of the innermost iteration, and therefore, they respectively satisfy \eqref{eq:backward-alg} where $\varphi_t^j, \mu_t^j$ appear in the reaction coefficients with final boundary condition  $\varphi_1^{j+1}, \mu_1^{j+1}$.

    If we set $\nu_t^{j+1}:= \log  \varphi_t^{j+1} -  \log \mu_t^{j+1}$, then from \eqref{eq:HJB-for},
    \begin{align}\label{eq:zeta}
        \partial_t \nu_t^{j+1} &\!-\! \frac{\sigma^2}{2} \big\langle 2(\nabla W*p_t) - \! \nabla \log \varphi_t^{j+1} \nonumber \\
        &-\! \nabla \log \mu_t^{j+1}, \nabla \nu_t^{j+1} \big \rangle + \frac{\sigma^2}{2}  \Delta \nu_t^{j+1} =  F_t^{j},
    \end{align}
where 
${F_t^{j}(x) := - \sigma^2\int_{\Omega}\langle \nabla \nu_t^{j}(y), \nabla W(x-y)\rangle p_t(y) \, \dy}$.
We can apply \cite[Theorem 2.10]{lieberman1996second} again to obtain 
\begin{align} \label{eq:bound-zeta}
 \!\!\!  \!\! \sup_t   \Vert \nu_t^{j+1} \Vert_\infty \leq e  \Big( \Vert \nu_1^{j+1}\Vert_\infty +  \sup_t \Vert F_t^{j} \Vert_\infty \Big),
\end{align}
 provided that the same Dirichlet spatial boundary conditions are enforced on $\varphi^{j+1}, \mu^{j+1}$.
By \ref{eq:assump-thm2-1}, we have $\Vert \nu_1^{j+1} \Vert_\infty \leq m_1 d_H^\oplus({\boldsymbol{\varphi}^j, \boldsymbol{\mu}^j})$.
From \ref{eq:assump-thm2-2} and Young's convolution inequality, we get $\Vert F_t^{j}  \Vert_\infty \leq  \sigma^2 \beta  m_3  \Vert \nabla \overline{W} \Vert_\infty d_H^\oplus({\boldsymbol{\varphi}^j, \boldsymbol{\mu}^j}).$
Therefore, by substituting the bounds on $\Vert \nu_1^{j+1} \Vert_\infty$ and $\Vert F_t^{j}  \Vert_\infty$ into \eqref{eq:bound-zeta}, we have $\sup_t  \Vert \nu_t^{j+1} \Vert_\infty \leq e  (m_1+m_3 \sigma^2 \beta \Vert \nabla \overline{W} \Vert_\infty) d_H^\oplus(\boldsymbol{\varphi}^j,\boldsymbol{\mu}^j)$. 
By \eqref{eq:infty-H}, we arrive at the following bound.
\begin{align}\label{eq:bound-phi-j}
 \sup_t d_H(\varphi_t^{j+1}&,\mu_t^{j+1}) \leq \nonumber\\
  &  2e (m_1 + \sigma^2 \beta m_3 \Vert \nabla \overline{W} \Vert_\infty) d_H^\oplus(\boldsymbol{\varphi}^j,\boldsymbol{\mu}^{j}).
\end{align}

By applying similar arguments, one can get
\begin{align}\label{eq:bound-hatphi-j}
\sup_td_H&(\hat\varphi_t^{j+1},\hat\mu_t^{j+1}) \leq \nonumber\\
& 2e (m_2 + \sigma^2 \beta m_4 \Vert \nabla \overline{W} \Vert_\infty)   d_H^\oplus(\boldsymbol{\varphi}^j,\boldsymbol{\mu}^{j}).
\end{align}
From the definition of  $d_H^{\oplus}$ in \eqref{eq:dH-prod}, the estimates in \eqref{eq:bound-phi-j} and \eqref{eq:bound-hatphi-j} translate into a contraction estimate in $d_H^{\oplus}$, giving
$d_H^\oplus(\mathcal G_p(\boldsymbol{\varphi}),\mathcal G_p(\boldsymbol{\mu})) \leq \Lambda_p d_H^\oplus(\boldsymbol{\varphi},\boldsymbol{\mu}),$
for all $\boldsymbol{\varphi},\boldsymbol{\mu}$ in $V_\rho^p$, where $\Lambda_p$ is as given in \eqref{eq:Lambda_p}.

Let us set ${\boldsymbol{\mu}(p) = \boldsymbol{\varphi}^\star(p)}$ and ${\boldsymbol{\varphi}(p) = \boldsymbol{\varphi}^{0}(p)}$. As long as ${\boldsymbol{\varphi}^{0}(p) \in V_{\rho}^p}$ and $\Lambda_p<1$, we have $d_H^{\oplus} \big(\boldsymbol{\varphi}^1(p),\boldsymbol{\varphi}^\star(p)\big) = d_H^{\oplus} \big(\mathcal G_p(\boldsymbol{\varphi}^0(p)),\mathcal G_p(\boldsymbol{\varphi}^\star(p)) \big)  \leq \Lambda_p \,  d_H^{\oplus} \big(  \boldsymbol{\varphi}^0(p) ,  \boldsymbol{\varphi}^\star(p)\big) < \rho(p)$, and $\boldsymbol{\varphi}^1(p)$ stays in $V_\rho^p$. Hence, $d_H^{\oplus} \big( \boldsymbol{\varphi}^j(p), \boldsymbol{\varphi}^\star(p) \big)  \leq \Lambda_p^j \,  d_H^{\oplus} \big( \boldsymbol{\varphi}^0(p) ,  \boldsymbol{\varphi}^\star(p) \big).$
Further, $\lim_{j \to \infty}d_H^{\oplus}\big(\boldsymbol{\varphi}^j(p), \boldsymbol{\varphi}^\star(p) \big) =0.$
Although $d_H^\oplus$ is projective, the sequence $\boldsymbol{\varphi}^j(p) \to \boldsymbol{\varphi}^\star(p)$ without scaling ambiguity as $\boldsymbol{\varphi}^\star(p)$ has to satisfy the same Dirichlet spatial boundary condition as elements of $V_\rho^p$.
The uniqueness of such a fixed point follows arguments analogous to those at the end of the proof of Theorem~\ref{thm:k-iter}.
\end{proof}

\begin{theorem}\label{thm:overall}
    Let $(p^\star,\varphi^\star(p^\star), \hat \varphi^\star(p^\star))$ be a fixed point consistent with \eqref{eq:map c} and \eqref{eq:map g}.
    Under the assumptions of Theorems~\ref{thm:k-iter} and \ref{thm:j-iter}, if $p^{(0)} \in B_r(p^\star)$ and $\lambda<1$, and if $\boldsymbol{\varphi}^0(p^{(k)}) \in V_\rho^{p^{(k)}}$ for all $k$ \footnote{By \eqref{eq:Bound-varphi} and \eqref{eq:Bound-hatvarphi}, one can verify that this holds if $\lambda(1+ \lambda) r \leq \rho(p^{(k)}),k \geq 1$.} together with $\sup_{p \in B_r(p^\star)}\Lambda_{p}<1$, then, as $k,j \to \infty$, Algorithm~\ref{alg} converges locally in $d_H^\mathcal S \times d_H^{\oplus}$ to $(p^\star,\varphi^\star(p^\star), \hat \varphi^\star(p^\star))$, which is unique in $B_r(p^\star) \times V_\rho ^{p^\star}$. 
\end{theorem}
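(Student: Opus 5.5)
The plan is to combine the two local contraction results, keeping track of the nesting of the loops. The innermost loop (index $i$) converges globally by the Birkhoff--Bushell argument given before Theorem~\ref{thm:k-iter}. So for fixed $p$ and fixed reaction coefficients, each application of $\mathcal G_p$ is well defined. The intermediate loop (index $j$) is then handled by Theorem~\ref{thm:j-iter}, and the outer loop (index $k$) by Theorem~\ref{thm:k-iter}. The task is to check that the hypotheses of each theorem stay valid along every iterate produced by the algorithm, and then to pass to the limits in the order $j\to\infty$ first and $k\to\infty$ second.

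\emph{Step 1: the $j$-loop at each outer iterate.} I will argue by induction on $k$ that $p^{(k)}\in B_r(p^\star)$.
\begin{itemize}
\item For $k=0$ this holds by assumption.
\item Given $p^{(k)}\in B_r(p^\star)$, the uniform bound $\sup_{p\in B_r(p^\star)}\Lambda_p<1$ together with $\boldsymbol{\varphi}^0(p^{(k)})\in V_\rho^{p^{(k)}}$ lets me invoke Theorem~\ref{thm:j-iter}. This gives $\boldsymbol{\varphi}^j(p^{(k)})\to\boldsymbol{\varphi}^\star(p^{(k)})$ in $d_H^\oplus$.
\item This limit is unique in $V_\rho^{p^{(k)}}$, and it has no scaling ambiguity because of the common Dirichlet boundary data.
\item The intermediate loop therefore realizes exactly the pair $(\varphi^\star(p^{(k)}),\hat\varphi^\star(p^{(k)}))$ used in the definition \eqref{eq:map c}. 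Hence the update \eqref{eq:iteration-nl} is $p^{(k+1)}=\mathcal C(p^{(k)})$.
\item Theorem~\ref{thm:k-iter} then gives $d_H^{\mathcal S}(p^{(k+1)},p^\star)\le\lambda\, d_H^{\mathcal S}(p^{(k)},p^\star)<r$, which closes the induction.
\end{itemize}
The same induction yields $d_H^{\mathcal S}(p^{(k)},p^\star)\le\lambda^k r\to 0$.

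\emph{Step 2: convergence of the $\varphi$-component.} This uses the triangle inequality for $d_H^\oplus$:
\[
d_H^\oplus\big(\boldsymbol{\varphi}^\star(p^{(k)}),\boldsymbol{\varphi}^\star(p^\star)\big)\le \max\{A,\hat A\}\, d_H^{\mathcal S}(p^{(k)},p^\star).
\]
Here $A$ and $\hat A$ are the constants appearing in \eqref{eq:Bound-varphi} and \eqref{eq:Bound-hatvarphi}. These estimates were derived for arbitrary $p,q\in B_r(p^\star)$, so they apply with $q=p^\star$. Combining this with Step 1 gives the iterated-limit convergence in the product metric $d_H^{\mathcal S}\times d_H^\oplus$. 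The same estimates justify the footnote's sufficient condition for the warm starts: if one warm-starts with $\boldsymbol{\varphi}^0(p^{(k)})=\boldsymbol{\varphi}^\star(p^{(k-1)})$, then
\[
d_H^\oplus\big(\boldsymbol{\varphi}^\star(p^{(k-1)}),\boldsymbol{\varphi}^\star(p^{(k)})\big)\lesssim \lambda\big(d_H^{\mathcal S}(p^{(k-1)},p^\star)+d_H^{\mathcal S}(p^{(k)},p^\star)\big)\le\lambda(1+\lambda)r.
\]

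\emph{Step 3: uniqueness.} Suppose $(q^\star,\boldsymbol{\mu}^\star)$ is another consistent fixed point in $B_r(p^\star)\times V_\rho^{p^\star}$. Theorem~\ref{thm:k-iter} forces $q^\star=p^\star$. Theorem~\ref{thm:j-iter}, applied at $p=p^\star$, then forces $\boldsymbol{\mu}^\star=\boldsymbol{\varphi}^\star(p^\star)$.

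The main obstacle I expect is the interchange and consistency of limits. In practice the $j$-loop is truncated at finite $N_2$, so $\mathcal C$ is applied only approximately. A fully rigorous statement would need a perturbed-contraction bound showing that errors of size $\Lambda_p^{N_2}\rho$ are damped by $\lambda$. I would handle this by stating the convergence as the iterated limit $j\to\infty$ first and then $k\to\infty$, which is how the theorem is phrased ("as $k,j\to\infty$"). I would remark that a standard inexact fixed-point argument gives the joint limit whenever the truncation errors are summable.
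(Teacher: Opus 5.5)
Your proposal is correct and follows essentially the same route as the paper: you keep $p^{(k)}\in B_r(p^\star)$ via the contraction of Theorem~\ref{thm:k-iter}, invoke Theorem~\ref{thm:j-iter} at each fixed $p^{(k)}$ using the uniform bound on $\Lambda_p$, and transfer convergence to the $\varphi$-components through \eqref{eq:Bound-varphi} and \eqref{eq:Bound-hatvarphi} with $q=p^\star$. The rest of what you add elaborates points the paper leaves implicit: the explicit induction, the verification of the footnote (which works because the constants in \eqref{eq:Bound-varphi}--\eqref{eq:Bound-hatvarphi} are each at most $\lambda$), the two-step uniqueness argument, and the caveat about truncating the $j$-loop at finite $N_2$.
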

\begin{proof}
    If $p^{(0)} \in B_r(p^\star)$ and $\lambda<1$, then $p^{(k)} \in B_r(p^\star)$ for all $k$, and $p^{(k+1)} = \mathcal C(p^{(k)}) \to p^\star$ as $k \to \infty$. For a fixed $p=p^{(k)}$, if $\boldsymbol{\varphi}^{0} (p^{(k)})\in V_\rho^{p^{(k)}}$, and $\sup_{p \in B_r(p^\star)}\Lambda_{p}<1$, then $\boldsymbol{\varphi}^{j+1} (p^{(k)}) = \mathcal G_{p^{(k)}} \big(\boldsymbol{\varphi}^{j} (p^{(k)})\big) \to \boldsymbol{\varphi}^\star (p^{(k)})$, as $j \to \infty$. Further, by \eqref{eq:Bound-varphi}, taking $\varphi^\star(p)=\varphi^\star(p^{(k)}), \varphi^\star(q) = \varphi^\star(p^\star)$, $\lim_{k \to \infty} \sup_{t} d_H\big(\varphi_t^\star(p^{(k)}), \varphi_t^\star(p^\star)\big) =0.
    $ Analogously, by \eqref{eq:Bound-hatvarphi}, we get $\lim_{k \to \infty} \sup_{t} d_H\big(\hat\varphi_t^\star(p^{(k)}), \hat\varphi_t^\star(p^\star)\big) =0.$
    Consequently, $\lim_{k \to \infty} \big(p^{(k)},\varphi^\star(p^{(k)}), \hat\varphi^\star(p^{(k)})\big) = \big(p^\star,\varphi^\star(p^\star), \hat\varphi^\star(p^\star)\big)$. Uniqueness of the limit point follows from analogous arguments to those at the end of the proof of Theorem~\ref{thm:k-iter}.
\end{proof}


Theorems~\ref{thm:k-iter}-\ref{thm:overall} can be understood from a perturbative viewpoint, where allowed perturbations within specified neighborhoods are evaluated based on two types of sufficient conditions. The first relates to the regularity of perturbations, such as \ref{Assump.psi_t} and \ref{eq:assump-thm2-2}, where the corresponding constants, e.g., $a_1, m_3$, are weighted by the interaction strength $\beta$ in the expressions of $\lambda, \Lambda_p$,
and hence, their effects are attenuated when $\beta$ is sufficiently small. 
The second pertains to stability, such as \ref{Assump.phipsi-boundary} and \ref{eq:assump-thm2-1}, assessing the sensitivity of the HJB solutions at the boundary to perturbations of its coefficients stemming from updates on $p^{(k)}$, or the reaction coefficients. Weak interaction also moderates these contributions by reducing the drift and reaction terms, cf.~\eqref{eq:HJB-for} and \eqref{eq:HJB-back}. Thus, weak interaction is one way by which $\lambda, \Lambda_p<1$ hold.
Our estimates are conservative, relying on non-tight upper bounds such as Theorem~2.10 in \cite{lieberman1996second} to present simple, verifiable conditions. The local neighborhoods considered are expected to be larger under tighter bounds. In practice, one can also apply damping when updating the reaction coefficients and/or $p_t$, which can help stabilize iterations outside the regimes considered. Analyzing a damped version of the algorithm is a subject of future work.

\section{Numerical Results}\label{sec:NumericalResults}
We present two examples for solving the MFSB problem using Algorithm~\ref{alg}. In both, $p_{\rm in}(x) \propto 0.5\big(\exp \big(-(x-0.5)^2/0.08\big)+\exp \big(-(x+0.4)^2/0.08\big)\big)$, $p_{\rm fin}(x) \propto \exp \big(-(x-0.4)^2/0.08\big)$, {${\texttt{tol}}=10^{-6}$}, and $\sigma^2 =0.5$.  The algorithm is initialized with the non-interacting SB solution. Our code is available in the \href{https://github.com/a-eld/MFSB}{GitHub} repository.

\begin{figure}[t]
    \centering
    \includegraphics[width=1\linewidth]{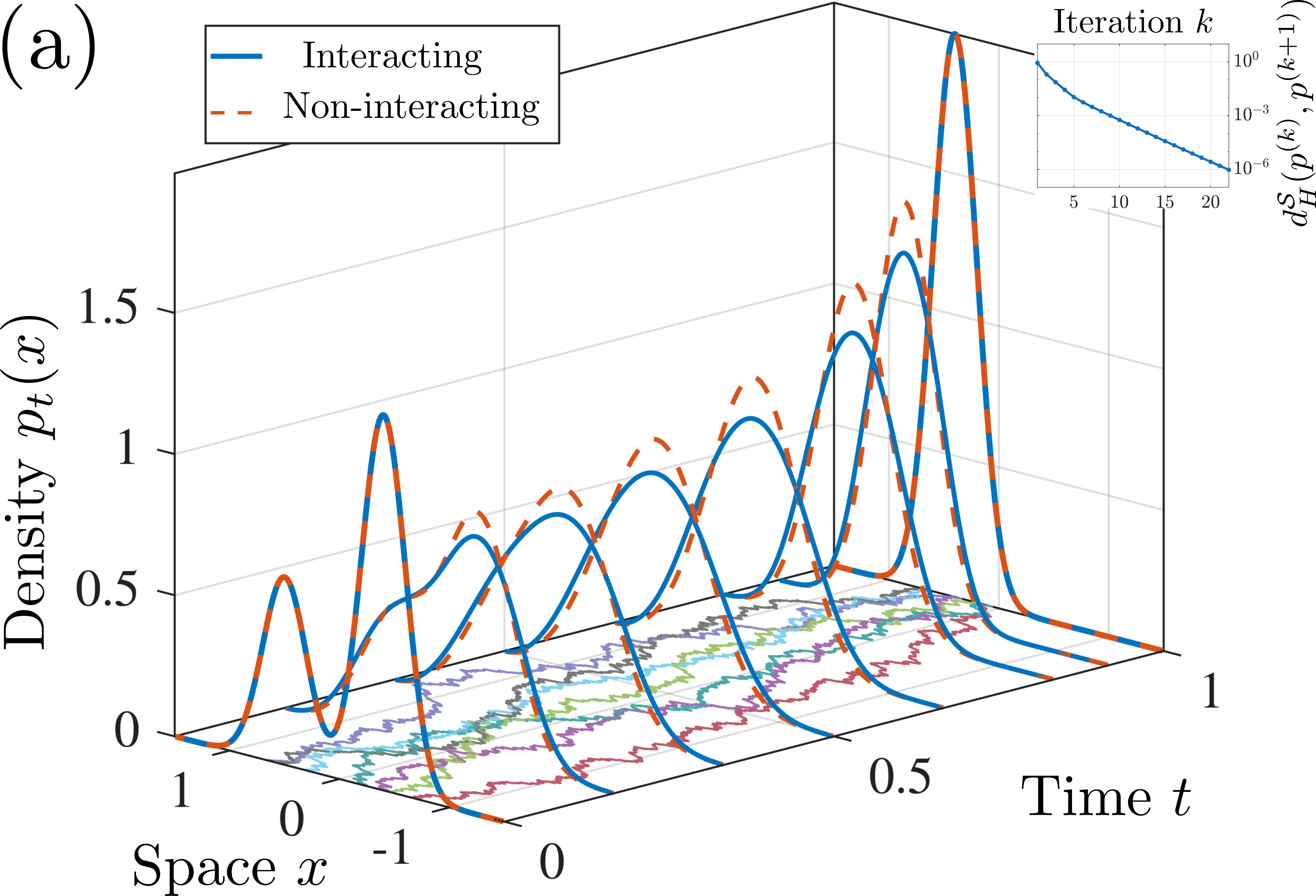}\\[3pt]
    \includegraphics[width=1\linewidth]{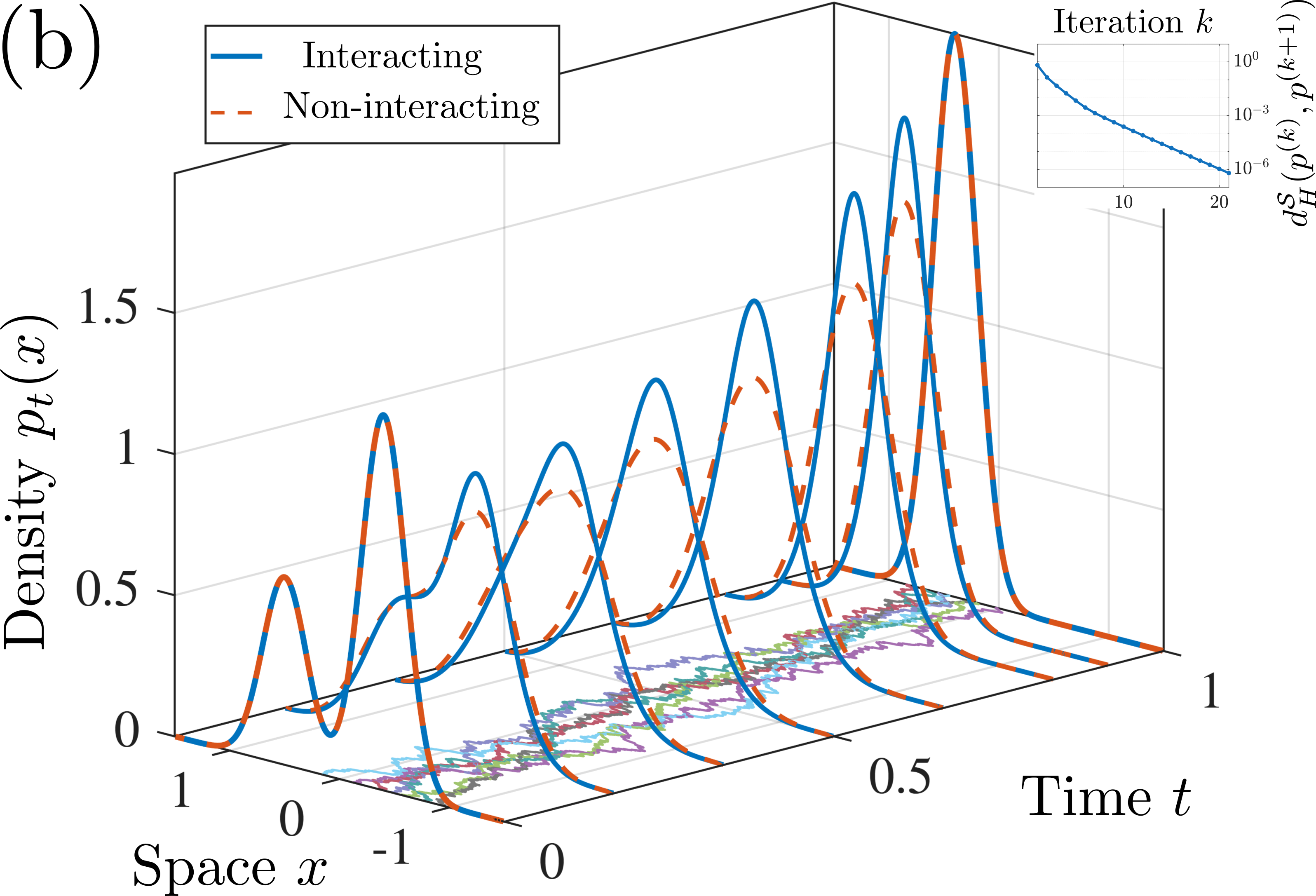}
    \caption{}
    \label{fig}
\end{figure}


We consider repulsive and attractive potentials $W(x) = \pm 1.4/(\sqrt{x^2 +0.01^2})^{0.2}$ and the respective results are shown in Fig.~\ref{fig}(a) and Fig.~\ref{fig}(b). The figures show the MFSB marginals, their non-interacting counterparts, and sample trajectories of \eqref{AgentSDE}, generated by $1000$ particles under the obtained controller. The insets in Fig.~\ref{fig}(a) and Fig.~\ref{fig}(b) depict convergence of the outer loop. At the final outer iteration, the residuals $(\sup_t \vert\alpha_t(p^{(k)}) -1 \vert,d_H(\varphi^{\{i_{\rm f}\}}_1, \varphi^{\{i_{\rm f}+1\}}_1), d_H^\oplus(\boldsymbol{\varphi}^{j_{\rm f}},\boldsymbol{\varphi}^{j_{\rm f}+1})),$ where $i_{\rm f}, j_{\rm f}$ respectively denote the final $i$th and $j$th loop indices, are approximately $(8 \times 10^{-3},1.5 \times 10^{-7},3 \times 10^{-7})$ for the repulsive case and $(3 \times 10^{-3}, 2 \times 10^{-7},3 \times 10^{-7})$ for the attractive case.

\balance
\bibliographystyle{IEEEtran}
\bibliography{references.bib}
\end{document}